\newtheorem{numbered}{}[section]
\newtheorem{thm}[numbered]{Theorem}
\newtheorem{remark}[numbered]{Remark}
\newtheorem{prop}[numbered]{Proposition}
\newtheorem{cor}[numbered]{Corollary}
\numberwithin{equation}{section}
\begin{document}

\title{Eisenstein series and modular differential equations}
\author{Abdellah Sebbar \& Ahmed Sebbar}

\address{Department of Mathematics and Statistics, University of Ottawa, Ottawa Ontario K1N 6N5 Canada}
\address{Institut de Math\'ematiques de Bordeaux,
Universit\'e Bordeaux 1 351, cours de la Lib\'eration F-33405
Talence cedex}

\email{asebbar@uottawa.ca}
\email{ahmed.sebbar@math.u-bordeaux1.fr}

\subjclass[2000]{Primary 11F11, 34M05}

\keywords{Differential equations, modular forms, Schwarz
derivative, equivariant forms}

\begin{abstract}The purpose of this paper is to solve various differential
equations having Eisenstein series as coefficients using various tools and techniques. The solutions
are given in terms of modular forms, modular functions and
equivariant forms. 
\end{abstract}
\maketitle

\section{Introduction}
In this paper we study differential equations of the form
$$y''\,+\,\alpha E_4(z)y\,=\,0,$$
where $E_4(z)$ is the weight 4 Eisenstein series, for different
values of $\alpha$ yielding solutions of different kinds. These
differential equations are interesting in many respects. The
graded differential ring of modular forms and their derivatives is
generated by the Eisenstein series $E_2$, $E_4$ and $E_6$, and the
differential being $\frac{d}{2\pi i dz}$. Differentiating twice a
homogeneous element of this ring would increase the weight by 4
and multiplying by $E_4$ would have the same effect. This
justifies the form of the above second order linear differential
equation with a particular interest towards the case where
$\alpha$ is a rational multiple of $\pi^2$ (resulting from a
double differentiation).

 One particular second order linear differential equation
that has drawn the attention of Klein ~\cite{klein}, Hurwitz
~\cite{hur}, Van der Pol ~\cite{vdp} takes the form
\begin{equation}\label{vdp}
y''\,+\,\frac{\pi^2}{36}E_4(z)y\,=\,0.
\end{equation}
 Van der Pol,
~\cite{vdp}, noticed that the Riccati equation attached to
\eqref{vdp} is
\begin{equation}\label{vdp1}
\frac{6}{i\pi}u'\,+\,u^2\,=\,E_4,
\end{equation}
and thanks to Ramanujan's identities, ~\cite{ram}, one has
$u=-E_2$ as a solution to \eqref{vdp1}. Thus one can recover the
solution to \eqref{vdp} in terms of the classical discriminant
function.

The purpose of this paper is to investigate  differential
equations similar to \eqref{vdp1}. Namely, we study the Riccati
equation, or the corresponding linear ODE, of the form
\begin{equation}\label{seb}
\frac{k}{i\pi}u'\,+\,u^2\,=\,E_4\,  ,\qquad k=1,\cdots,6.
\end{equation} and the corresponding second order ODE
\begin{equation}\label{vdp2}
y''\,+\,\frac{\pi^2}{k^2}E_4(z)y\,=\,0.
\end{equation}
It turns out that in all but one case, modular functions arise as
solutions. However, when $k=1$, we will see that the solution has
very special features leading to an example of the so-called
equivariant forms developed by the authors in a forthcoming paper.
In four cases, these equations can be transformed to Schwarzian
differential equations which can be solved using techniques
 developed in a previous paper by one of the
author~\cite{s1}.

It should be mentioned that differential equations with modular
forms as coefficients appear in various contexts including in
physics. Indeed, particular differential equations similar to
\eqref{vdp2} but with the presence of a first order term, namely
$$y''-4i\pi E_2(z)y'+\frac{44}{5}\pi^2E_4(z)y\,=\,0\ ,$$
appear in ~\cite{mil} as a consequence of the character formula
for the Virasoro  model and the theory of vertex operator
algebras. This leads to one of the Ramanujan identities.
Surprisingly, the technique developed in ~\cite{m-s,s1} in the
level 5 case a few years before ~\cite{mil} can provide the same
solution with purely number-theoretic arguments. Another similar
equation
$$y''-\frac{1}{3}i\pi E_2(z)y'+\frac{2}{3}\pi^2E_4(z)y\,=\,0$$
appears in ~\cite{sam} to classify rational conformal theories. On
the other hand, an extensive study of some modular differential
equations involving $E_2$ and $E'_2$ as coefficients can be found
in ~\cite{k-k} of the form
\begin{equation}\label{kk1}
y''-\frac{k+1}{6}E_2(z)y'+\frac{k(k+1)}{12}E'_2(z)y\,=\,0.
\end{equation}
 These equations are studied from the hypergeometric
equations point of view. However, the only common feature with our
modular differential equations is the fact that they are both
homogeneous in the differential ring of quasi-modular forms.
Moreover, the equation \eqref{kk1} can be symmetrized to have the
form
\begin{equation}\label{kk2}
\partial_{k+2}\partial_ky-\frac{k(k+1)}{144}E_4(z)y\,=\,0
\end{equation}
where $\partial_ky=y'-\frac{k}{12}E_2(z)y$ and using
$12E'_2(z)=E_2^2-E_4$. However, the equations \eqref{vdp2} and
\eqref{kk2} are different in their nature and in their solutions.

The paper is organized as follows: In Section 2, we recall the
correspondence between second order linear ODE, the Riccati
equations, and the Schwarz differential equation. In Section 3 we
deal with the classical case of Van der Pol corresponding to the
case $k=6$ in \eqref{seb}. Section 4 deals with the cases
$k=2,3,4,5$ in one shot using analytic properties of
the Schwarzian derivative derived from a previous work ~\cite
{m-s}, ~\cite{s1}. The most interesting case to us is when $k=1$.
In addition of providing the solution to the corresponding ODE, we
find that this solution satisfies an intriguing equivariance
property.

{\em Acknowledgmens.} We thank the referee for his comments and
for pointing out the reference \cite{MK}.

\section{Linear and non-linear ODE's}
In this section, we recall some classical connections between
three kinds of differential equations.

Let $R(z)$ be a meromorphic function on a domain $D$ of the
complex plane. The first  equation under consideration is the
second order linear differential equation of the form
\begin{equation}\label{lode}
y''\,+\,\frac{1}{4}R(z)y\,=\,0\,,
\end{equation}
for which the space of local solutions is a two-dimensional vector
space. The second differential equation is non-linear and has the
form
\begin{equation}\label{sch} \{f,z\}\,=\,R(z)\,,
\end{equation}
 involving the Schwarz derivative
 $$
 \{f,z\}\,=\,2\left(\frac{f''}{f'}\right)'\,-\,\left(\frac{f''}{f'}\right)^2.
 $$
 If $y_1$ and $y_2$ are two linearly independent
 solutions to \eqref{lode}, then $f=y_1/y_2$ is a solution to
 \eqref{sch}. Moreover, a function $g$ is a solution to
 \eqref{sch} if and only if $g$ is a non-constant linear
 fractional transform of $f$. Conversly, if $f$ is a solution to
 \eqref{sch} which is locally univalent, then $y_1=f/\sqrt{f'}$ and
 $y_2=1/\sqrt{f'}$ are two linearly independent solutions to
 \eqref{lode}.

 Meanwhile, if we set $u=f''/f'$, we see that \eqref{sch} yields
 a Riccati equation
 \begin{equation}\label{ric0}
 2u'\,-\,u^2\,=\,R(z).
 \end{equation}
 Finally, one can go directly from solutions to \eqref{lode} to
 solution to \eqref{ric0} by taking  logarithmic derivatives.

 In the remaining of this paper, we will provide solutions to
 certain differential equations by favoring the form that leads to
 a simple closed solution.

\section{Eisenstein series as coefficients of ODE's}

The study of differential equations having modular forms as
solutions or as coefficients has a long history going back to
Dedekind, Schwarz, Klein, Poincar\'e, Hurwiz, Ramanujan, Van der
Pol, Rankin and several others. The differential equations under
consideration are Schr\"odinger type equations  (or
Sturm-Liouville equations) $y''+Q(z)y=0$ where $Q(z)$ is the
potential. When $Q(z)$ satisfies some invariance properties, the
Schr\"odinger equation leads to a variety of interesting theories.
For instance, when the potential $Q(z)$ is periodic, then we have
Mathieu's equation or Hill's equation widely studied in the last
two centuries. When the potential is doubly periodic, we have
Lam\'e equation which motivated extensive research leading to the
modern theory of integrable systems. Also as it was mentioned in
the introduction, differential equations with modular forms as
coefficients have been extensively studied in physics and in
relation with hypergeometric functions and quasimodular forms.

In this paper we consider equations with a potential that is
automorphic or modular explicitly given by the Eisenstein series.
We will propose solutions to these equations and show that their
existence is very natural.
 Let us recall some definitions.

The Dedekind eta-function is defined on the upper half-plane
$\mathcal H=\{z\in{\mathbb C}\ ,\ \Im z>0\}$ by
$$\eta(z)\,=\,q^{\frac{1}{24}}\,\prod_{n\geq 1}(1-q^n)\,  ,\quad
q=e^{2\pi i z}\, ,\, z\in{\mathcal H}.
$$
The discriminant $\Delta$ is defined by
$$
\Delta(z)\,=\,\eta(z)^{24}.
$$
The Eisenstein series $E_2(z)$, $E_4(z)$ and $E_6(z)$ are defined
for $z\in{\mathcal H}$ by
$$
E_2(z)\,=1-24\,\sum_{n\geq1}\,\frac{nq^n}{1-q^n}\,=\,1-24\,\sum_{n\geq
1}\,\sigma_1(n)\,q^n,
$$
$$
E_4(z)\,=1+240\,\sum_{n\geq1}\,\frac{n^3q^n}{1-q^n}\,=\,1+240\,\sum_{n\geq
1}\,\sigma_3(n)\,q^n,
$$
and
$$
E_6(z)\,=1-504\,\sum_{n\geq1}\,\frac{n^5q^n}{1-q^n}\,=\,1-504\,\sum_{n\geq
1}\,\sigma_5(n)\,q^n,
$$
 where $\sigma_k(n)$ is the sum of the $k$-th powers  of all
the positive divisors of $n$. The function $E_4$ and $E_6$ are
modular forms for $\mbox{SL}_2(\mathbb Z)$ of respective weight 4
and 6. In ~\cite{vdp}, Van der Pol studied the equation
$$
y''\,+\,\frac{\pi^2}{36}E_4(z)y\,=\,0,
$$
and was later generalized to higher order differential equations
in ~\cite{vdp1}. This equation appears also in ~\cite{hur} and in
~\cite{klein}.

If we set
$$u\,=\,\frac{6}{i\pi}\,\frac{y'}{y},$$
then the differential equation satisfied by $y$ transforms into a
Riccati equation
$$
\frac{6}{i\pi}\,u'\,+\,u^2\,=\,E_4(z).
$$
In the meantime, we have differential relations between Eisenstein
series also known as the Ramanujan identities ~\cite{ram}
\begin{equation}\label{ram1}
\frac{1}{2\pi i}\frac{d}{dz}\,E_2(z)\,=\,\frac{1}{12}(E_2^2-E_4),
\end{equation}
\begin{equation}\label{ram2}
\frac{1}{2\pi i}\frac{d}{dz}\,E_4(z)\,=\,\frac{1}{3}(E_2E_4-E_6),
\end{equation}
\begin{equation}\label{ram3}
\frac{1}{2\pi
i}\frac{d}{dz}\,E_6(z)\,=\,\frac{1}{2}(E_2E_6-E_4^2).
\end{equation}
Using \eqref{ram1} and the fact that
\begin{equation}\label{ram0}
E_2(z)\,=\,\frac{1}{2\pi i}\,\frac{\Delta'}{\Delta}\, ,
\end{equation}
we have
\begin{prop}
The Riccati equation
$$
\frac{6}{i\pi}\,u'\,+\,u^2\,=\,E_4(z)
$$ has $u=-E_2(z)$ as a solution, and the corresponding linear
differential equation
$$
y''\,+\,\frac{\pi^2}{36}E_4(z)y\,=\,0
$$ has as a solution
$$y\,=\,{\Delta}^{-\frac{1}{12}}\,=\,{\eta}^{-2}.$$
\end{prop}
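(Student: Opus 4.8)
The statement contains two assertions, and the natural plan is to establish the Riccati claim first by a direct substitution and then transfer it to the linear equation through the logarithmic-derivative correspondence recalled in Section 2.

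For the Riccati equation I would simply substitute $u=-E_2$ and verify the identity. With this choice one has $u^2=E_2^2$ and $u'=-E_2'$, so the left-hand side becomes $-\frac{6}{i\pi}E_2'+E_2^2$. The Ramanujan identity \eqref{ram1} gives $\frac{1}{2\pi i}E_2'=\frac{1}{12}(E_2^2-E_4)$, hence $\frac{6}{i\pi}E_2'=E_2^2-E_4$. Substituting this in, the $E_2^2$ terms cancel and one is left with $E_4$, which is exactly the right-hand side. This step is entirely mechanical once \eqref{ram1} is in hand, and presents no obstacle.

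To pass to the linear ODE I would invoke the correspondence from Section 2: under $u=\frac{6}{i\pi}\frac{y'}{y}$ the equation $y''+\frac{\pi^2}{36}E_4 y=0$ is equivalent to the Riccati equation above. Setting $v=y'/y$ one has $y''/y=v'+v^2$, and a short computation, tracking the factor $(i\pi/6)^2=-\pi^2/36$, confirms that the two equations match. It therefore suffices to reconstruct $y$ from the known Riccati solution $u=-E_2$ by integrating the first-order relation $\frac{y'}{y}=\frac{i\pi}{6}u=-\frac{i\pi}{6}E_2$.

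Finally I would use \eqref{ram0}, namely $E_2=\frac{1}{2\pi i}\frac{\Delta'}{\Delta}$, to rewrite the logarithmic derivative as $\frac{y'}{y}=-\frac{1}{12}\frac{\Delta'}{\Delta}$. Integrating gives $\log y=-\frac{1}{12}\log\Delta$ up to an additive constant, so $y=\Delta^{-1/12}$, and since $\Delta=\eta^{24}$ this equals $\eta^{-2}$. The only point requiring care is the bookkeeping of the factors of $i$ and $\pi$ in the passage between the three forms of the equation; apart from that the whole argument is a direct consequence of the two Ramanujan relations \eqref{ram1} and \eqref{ram0}.
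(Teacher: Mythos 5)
Your proof is correct and follows exactly the route the paper takes (implicitly, just before stating the proposition): substitute $u=-E_2$ into the Riccati equation and verify it via the Ramanujan identity \eqref{ram1}, then pass to the linear equation through the substitution $u=\frac{6}{i\pi}\frac{y'}{y}$ and integrate using \eqref{ram0} to obtain $y=\Delta^{-1/12}=\eta^{-2}$. Your version merely spells out the constant-tracking that the paper leaves to the reader.
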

For the remainder of this paper, we study similar equations
obtained by varying the coefficient of $u'$ in the Riccati
equation. Namely, we look at the following equations
\begin{equation}\label{ricgen}
\frac{k}{i\pi}\,u'\,+\,u^2\,=\,E_4(z)\ ,\quad k=1,\cdots,6\,.
\end{equation}
While the case $k=6$ has been the subject of this section, the
cases $k=2,\cdots,5$ will be treated in the next section, and the
case $k=1$ will be the subject of the following section.

\section{The modular case}
We follow a different approach in dealing with the cases $2\leq
k\leq 5$ in \eqref{ricgen} by using the equivalent form involving
the Schwarzian differential equations
\begin{equation}\label{schgen}
\{f,z\}\,=\,\frac{k^2}{4\pi^2}\,E_4(z)\ ,\quad 2\leq k\leq 5.
\end{equation}
Suppose that $f$ is a modular function for a finite index subgroup
$\Gamma$ of $\mbox{SL}_2(\mathbb Z)$, that is a meromorphic
function defined on $\mathcal H$ such that $f(\gamma\cdot
z)\,=\,f(z)$ for $\gamma\in\Gamma$, $z\in{\mathcal H}$ and
$\gamma\cdot z=\frac{az+b}{cz+d}$ for $\gamma=\binom{a\ b}{c\ d}$,
and $f$ extending to a meromorphic function on ${\mathcal
H}\cup\{\infty\}$. The derivative $f'$ is a weight 2 meromorphic
modular form, and one can show that if $g$ is a modular form of
weight $k$, then $kgg''-(k+1)g'^2$ is a modular form of weight
$2k+4$. Applying this to $f'$, one obtains that $2f'f'''-3f''^2$
is a modular form of weight 8, and if divided by $f'^2$, it yields
$\{f,z\}$ which is then a modular form of weight 4. A more
interesting case occurs when the group $\Gamma$ is of genus 0 in
the sense that the compactification of the Riemann surface
$\Gamma\backslash{\mathcal H}$ is of genus 0. Indeed, if $f$ is a
Hautpmodul for $\Gamma$, that is a generator of the function field
of the compact Riemann surface, then $\{f,z\}$ is actually a
weight 4 modular form for exactly the normalizer of $\Gamma$ in
$\mbox{SL}_2(\mathbb R)$, ~\cite{m-s}. Furthermore, the form
$\{f,z\}$ has double poles at the elliptic fixed points of
$\Gamma$ (which are the critical points of $f$) and is holomorphic
everywhere including at the cusps.

Recall that the space of holomorphic modular forms of weight 4 for
$\mbox{SL}_2(\mathbb Z)$ is one-dimensional and is generated by
$E_4$. Thus, to be able to use the above analysis to get $\{f,z\}$
being a scalar multiple of $E_4$, one should look for $\Gamma$
being normal in $\mbox{SL}_2(\mathbb Z)$, having genus 0, and
having no elliptic points. This is the case if one takes $\Gamma$
to be one of the principal congruence groups $\Gamma(n)$, $2\leq
n\leq 5$. By choosing a Hauptmodul $f$ for each such $\Gamma(n)$,
one is able to identify exactly which multiple of $E_4$ is the
modular form $\{f,z\}$. It turns out that this depends on the
level $n$, which is also the cusp width at $\infty$. Indeed, for
$2\leq n\leq 5$, a Hauptmodul $f_n$ for $\Gamma(n)$ satisfies
$f_n(z+n)=f_n(z)$, $z\in{\mathcal H}$, hence $f_n$ has a Fourier
expansion in $q=\exp(2\pi iz/n)$, and if one chooses $f_n$ to
vanish at $\infty$, then we can take $f_n(z)=q+\mbox{O}(q)$ as
 $q\longrightarrow\infty$. In this case, one can show
directly from the expression of the Schwarz derivative that
$$\{f_n,z\}\,=\,\frac{4\pi^2}{n^2}\,+\,\mbox{O}(q) \ \mbox{as}\ q\longrightarrow\infty.$$
Notice that any other Hauptmodul will lead to the same conclusion
since it would be a linear fractional transform of $f_n$ and thus
having the same Schwarz derivative. In conclusion, we have
\begin{thm}
Let $f_n$ be any Hauptmodul for $\Gamma(n)$, $2\leq n\leq 5$, then
$f_n$ is a solution to the equation
$$\{f,z\}\,=\,\frac{4\pi^2}{n^2}\,E_4(z).$$
Furthermore, if we set
$$u_n=-\frac{n}{2\pi i}\frac{f_n''}{f_n'}\ ,\quad 2\leq n\leq5\, ,$$
then $u_n$ is a solution of the Riccati equation
$$
\frac{n}{i\pi}u'\,+\,u^2\,=\,E_4.
$$
\end{thm}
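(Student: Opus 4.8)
The plan is to leverage the structural analysis developed in the paragraphs preceding the theorem, which has already done most of the conceptual work. The key facts established are: (i) for $\Gamma(n)$ with $2\le n\le 5$, the group is normal in $\mathrm{SL}_2(\mathbb Z)$, has genus $0$, and has no elliptic points; (ii) consequently, for a Hauptmodul $f_n$, the Schwarz derivative $\{f_n,z\}$ is a \emph{holomorphic} weight $4$ modular form for $\mathrm{SL}_2(\mathbb Z)$; and (iii) the space of such forms is one-dimensional, spanned by $E_4$. Therefore $\{f_n,z\}=c\,E_4$ for some constant $c$, and the entire theorem reduces to pinning down the single scalar $c$.

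First I would fix the normalization: choose $f_n$ to vanish at $\infty$ with leading Fourier coefficient $1$, so that $f_n(z)=q+\mathrm{O}(q^2)$ where $q=\exp(2\pi i z/n)$ (the excerpt writes $q+\mathrm{O}(q)$, which I read as $q+\mathrm{O}(q^2)$). Then I would compute the constant term of $\{f_n,z\}$ directly from its $q$-expansion. Writing $w=2\pi i/n$ so that $q=e^{wz}$ and $dq/dz=wq$, one differentiates $f_n=q+\cdots$ term by term; to leading order $f_n'\sim wq$, $f_n''\sim w^2 q$, $f_n'''\sim w^3 q$, and substituting into $\{f_n,z\}=2f_n'''/f_n'-3(f_n''/f_n')^2$ gives the constant term $2w^2-3w^2=-w^2=4\pi^2/n^2$. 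This matches the claimed expansion $\{f_n,z\}=4\pi^2/n^2+\mathrm{O}(q)$. Comparing constant terms with $E_4=1+\mathrm{O}(q)$ forces $c=4\pi^2/n^2$, establishing the first assertion; the invariance under the choice of Hauptmodul follows since any other is a non-constant linear fractional transform of $f_n$, which by the correspondence recalled in Section 2 leaves the Schwarz derivative unchanged.

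For the Riccati statement, I would invoke the chain from equation \eqref{sch} to equation \eqref{ric0}: setting $v=f_n''/f_n'$ turns $\{f_n,z\}=R$ into $2v'-v^2=R$ with here $R=\frac{4\pi^2}{n^2}E_4$. The prescription $u_n=-\frac{n}{2\pi i}v=-\frac{n}{2\pi i}\frac{f_n''}{f_n'}$ is a rescaling $v=-\frac{2\pi i}{n}u_n$; substituting and using $\frac{d}{dz}=2\pi i\frac{d}{2\pi i\,dz}$ to rewrite the derivative, one checks that the factors $\pi$ and $n$ collect so that the equation becomes $\frac{n}{i\pi}u_n'+u_n^2=E_4$. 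This is a bookkeeping calculation in the constants with no conceptual obstruction.

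The main obstacle is genuinely the clean justification of step (ii)—that $\{f_n,z\}$ is holomorphic at the cusps and carries no poles in the interior. The interior holomorphy hinges on the \emph{absence of elliptic points} for $\Gamma(n)$ with $n\ge 2$, since the preceding paragraph notes that $\{f,z\}$ acquires double poles precisely at elliptic fixed points (the critical points of $f$); with none present, $f_n'$ is nonvanishing on $\mathcal H$ and $\{f_n,z\}$ stays holomorphic. Cusp holomorphy is the subtler point and I would appeal to the cited analysis in \cite{m-s} rather than reprove it. Once holomorphy and weight-$4$ modularity for the full group are in hand, the one-dimensionality of the space of weight-$4$ forms makes the constant-matching argument immediate, so the only real care needed is verifying these automorphy properties, which the cited references supply.
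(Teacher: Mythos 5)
Your proposal is correct and follows essentially the same route as the paper: the paper's own argument is precisely the structural analysis you cite (normality of $\Gamma(n)$ in $\mathrm{SL}_2(\mathbb Z)$, genus zero, no elliptic points, holomorphy at the cusps via \cite{m-s}, one-dimensionality of the space of weight-$4$ forms), followed by the same constant-term computation $\{f_n,z\}=-w^2+\mathrm{O}(q)=4\pi^2/n^2+\mathrm{O}(q)$ with $w=2\pi i/n$, and the Riccati statement obtained from the Section 2 correspondence by the same rescaling. Your reading of the paper's typographical slips ($q+\mathrm{O}(q)$ meaning $q+\mathrm{O}(q^2)$, and $q\to\infty$ meaning $q\to 0$) is the intended one.
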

To make this theorem  effective, we shall provide explicit
expressions for the above-mentioned Hauptmoduln. These expressions
are classical and can be found for instance in ~\cite{m-s} and
~\cite{s1}.

\begin{center}
\begin{tabular}{|c|c|}
\hline Group & Hauptmodul\\
\hline
&\\
$\Gamma(2)$&$\left(\frac{\eta(2z)}{\eta(z/2)}\right)^8$ \\
&\\
\hline
&\\
$\Gamma(3)$&
$\left(\frac{\eta(3z)}{\eta(z/3)}\right)^3$\\
&\\
\hline
&\\
$\Gamma(4)$&$\frac{\eta(z/2)\eta(4z)^2}{\eta(z/4)^2\eta(2z)}$\\
&\\
\hline
&\\
$\Gamma(5)$&$q^{1/5}\prod_{n\geq
1}(1-q^n)^{\left(\frac{n}{5}\right)}$ \\
&\\
\hline
\end{tabular}

$\left(\frac{\ \cdot\ }{\ \cdot\ }\right)$ is the Legendre symbol
\end{center}

This completes the study of the equations \eqref{ricgen} for the
cases $2\leq k\leq 5$, and the solutions are expressed in terms of
modular functions.

\begin{remark}{\rm
The case $k=6$ settled in Proposition 3.1 cannot use the method of
this section since $\Gamma(6)$ is not of genus 0. The same applies
to  the case $k=1$ since $\Gamma(1)=\mbox{SL}_2(\mathbb Z)$ has
elliptic fixed points. This case shall be studied in the next
section.}
\end{remark}

\section{The equivariant case}
We now focus on the case $k=1$  in ~\eqref{ricgen} or its
equivalent form in terms of the second order linear differential
equation. We have
\begin{thm}
The function $y=E_4'\,\Delta^{-1/2}$ is a solution to
\begin{equation}\label{k1}
y''\,+\,\pi^2E_4\,y\,=\,0.
\end{equation}
\end{thm}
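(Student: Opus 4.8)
The plan is to verify the equivalent first-order statement rather than to differentiate $y$ twice by brute force. By the linear-ODE/Riccati correspondence recalled in Section 2, applied to the case $k=1$ of \eqref{ricgen}, it suffices to show that
\[
u\,:=\,\frac{1}{i\pi}\,\frac{y'}{y}
\]
solves $\frac{1}{i\pi}u'+u^2=E_4$. Taking the logarithmic derivative of $y=E_4'\,\Delta^{-1/2}$ and using \eqref{ram0} to replace $\Delta'/\Delta$ by $2\pi i\,E_2$, I would first obtain a closed form for $u$. Writing $D=\frac{1}{2\pi i}\frac{d}{dz}$ (the operator appearing in \eqref{ram1}--\eqref{ram3}), a short computation gives
\[
u\,=\,2\,\frac{D^2E_4}{DE_4}\,-\,E_2 .
\]

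Substituting this into the Riccati equation, which in the $D$-normalization reads $2\,Du+u^2=E_4$, and clearing the denominator $DE_4$, the whole claim collapses to the single identity
\[
D^3E_4\,-\,E_2\,D^2E_4\,=\,\frac{5}{24}\,(E_4-E_2^2)\,DE_4 .
\]
To make this explicit I would apply the Ramanujan identities \eqref{ram1}--\eqref{ram3} repeatedly to express the iterated derivatives as isobaric polynomials in $E_2,E_4,E_6$; in particular $DE_4=\frac13(E_2E_4-E_6)$ and $D^2E_4=\frac{5}{36}(E_2^2E_4+E_4^2-2E_2E_6)$, and one further application yields $D^3E_4$ as a weight-$10$ polynomial.

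Both sides of the displayed identity are quasimodular of weight $10$, so this is an equality in $\mathbb{C}[E_2,E_4,E_6]$. The crux --- and the one point where the specific prefactor $E_4'$ in $y$ is essential --- is the factorization
\[
D^3E_4-E_2\,D^2E_4\,=\,\frac{5}{72}\bigl(E_2E_4^2-E_2^3E_4+E_2^2E_6-E_4E_6\bigr)\,=\,\frac{5}{72}\,(E_4-E_2^2)(E_2E_4-E_6),
\]
after which $E_2E_4-E_6=3\,DE_4$ reproduces exactly the right-hand side of the identity. I expect the main obstacle to be the bookkeeping in computing $D^3E_4$ and then recognizing this factorization through the factor $E_2E_4-E_6$; as a safeguard, since the weight-$10$ quasimodular forms form a finite-dimensional space, the identity could instead be confirmed by matching finitely many Fourier coefficients.

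Once the identity is established the Riccati equation holds, hence so does \eqref{k1}. Equivalently, one may run the argument directly via $\dfrac{y''}{y}=\bigl(y'/y\bigr)'+\bigl(y'/y\bigr)^2$ and check that it equals $-\pi^2E_4$; this passes through the very same weight-$10$ identity.
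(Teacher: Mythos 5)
Your proposal is correct, and every intermediate formula checks out: the expression $u=2\,\frac{D^2E_4}{DE_4}-E_2$, the reduction of the Riccati equation to the identity $D^3E_4-E_2\,D^2E_4=\frac{5}{24}(E_4-E_2^2)\,DE_4$, and the factorization $\frac{5}{72}(E_4-E_2^2)(E_2E_4-E_6)$ are all accurate (your $D^2E_4$ and $D^3E_4$ agree, after renormalizing by powers of $2\pi i$, with the paper's displayed $E_4''$ and $E_4'''$). The paper takes the more brute-force route you deliberately avoid: it differentiates $y=E_4'\Delta^{-1/2}$ twice directly, which requires computing $\Delta''=-\frac{13\pi^2}{3}\Delta E_2^2+\frac{\pi^2}{3}\Delta E_4$, writes $y''+\pi^2E_4y$ as $\Delta^{-1/2}$ times a bracket in $E_4',E_4'',E_4''',E_2,E_4$, and then substitutes the Ramanujan expressions to see that the bracket vanishes. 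The two arguments are equivalent at their computational core: rewriting the paper's bracket in your $D$-normalization gives exactly $(2\pi i)^3\bigl[D^3E_4-E_2D^2E_4-\frac{5}{24}(E_4-E_2^2)DE_4\bigr]$, i.e.\ precisely the identity you isolate. What your packaging buys is threefold: the logarithmic derivative eliminates $\Delta$ at the outset via \eqref{ram0}, so $\Delta''$ is never needed; your $u=\frac{1}{i\pi}\frac{E_4''}{E_4'}-E_2$ is exactly the function in the paper's subsequent Corollary, which you therefore obtain for free; and the factorization through $E_2E_4-E_6=3\,DE_4$ makes the final verification transparent, whereas the paper simply asserts that the substitution yields zero. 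One small point you leave implicit: the Riccati identity is an identity of meromorphic functions (your $u$ has poles at the zeros of $E_4'$), so the linear equation for $y$ is first obtained off those zeros and then everywhere by holomorphy of $y''+\pi^2E_4y$; this is standard and is covered by the correspondence recalled in Section 2.
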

\begin{proof}
Using \eqref{ram0} one has $\Delta'=2\pi i\Delta E_2$, and  using
\eqref{ram1}, one finds that
$$\Delta''\,=\,-\frac{13\pi^2}{3}\Delta E_2^2+\frac{\pi^2}{3}\Delta
E_4.$$ It follows that for $y=E_4'\,\Delta^{-1/2}$ we have
$$y''\,+\,\pi^2E_4\,y\,=\,\Delta^{-\frac{1}{2}}\,[E_4'''-2\pi
iE_4''E_2-\frac{5\pi^2}{6}E_4'E_2^2+\frac{5\pi^2}{6}E_4'E_4].$$
Meanwhile, since the differential ring of modular forms and their
derivatives is $\left({\mathbb C}[E_2,E_4,E_6],\frac{1}{2\pi
i}\frac{d}{dz}\right)$, one is able to express every term
algebraically by means of the generators $E_2$, $E_4$ and $E_6$.
Indeed, using Ramanujan identities \eqref{ram1}, \eqref{ram2} and
\eqref{ram3}, one has
$$E_4'\,=\,\frac{2\pi i}{3}\,(E_2E_4-E_6),$$
$$E_4''\,=\,-\frac{5\pi^2}{9}\left(E_2^2E_4-2E_2E_6+E_4^2\right),$$and
$$E_4'''\,=\,-\frac{5\pi^3
i}{9}\left(E_4E_2^3+3E_4^2E_2-3E_2^2E_6-E_4E_6\right).$$ Now,
substituting into the expression of $y''\,+\,\pi^2E_4\,y$ shows it
is identically equal to 0.

\end{proof}
We mentioned in the previous section that if $g$ is a modular form
of weight $k$, then $(k+1)g'^2-kgg''$ is a modular form of weight
$2k+4$. If $g=E_4$, then $5E_4'^2-4E_4E_4''$ is a holomorphic
modular form of weight 12, which clearly vanishes at $\infty$ and
thus it is a multiple of $\Delta$. Investigating the leading
coefficient yields
$$5E_4'^2-4E_4E_4''\,=\,-3840\pi^2\,\Delta.$$
While $y=(1/i\pi\sqrt{3840})E_4'\,\Delta^{-1/2}$ is still a
solution of \eqref{k1}, we then would like to find a function $f$
such that $1/\sqrt{f'}=y$. This, according to Section 2, will
provide us with a solution to the corresponding Schwarzian
equation $\{f,z\}=4\pi^2\,E_4(z)$. The function would satisfy
\begin{align*}
f'(z)\,&=\,\frac{1}{y^2}\\
 &=\,\frac{-3840\pi^2\,\Delta}{E_4'^2}\\
 &=\,\frac{5E_4'^2-4E_4E_4''}{E_4'^2}\\
 &=\,1+4\left(\frac{E_4}{E_4'}\right)'.
 \end{align*}
 Thus, we have the following

\begin{thm}
The function defined on $\mathcal H$ by
\begin{equation}\label{equiv}
f(z)\,=\,z\,+\,4\,\frac{E_4}{E_4'}
\end{equation}
is a solution to
$$
\{f,z\}\,=\,4\pi^2\,E_4(z)
$$
\end{thm}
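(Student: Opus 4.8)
The plan is to read the result off from the Section 2 correspondence together with Theorem 5.1, avoiding any frontal computation of the Schwarz derivative of $f$. Put $R(z)=4\pi^2E_4(z)$; then the associated linear equation $y''+\tfrac14 R\,y=0$ is exactly equation \eqref{k1}, for which Theorem 5.1 supplies the solution $y=E_4'\,\Delta^{-1/2}$. After the harmless rescaling $y=(i\pi\sqrt{3840})^{-1}E_4'\,\Delta^{-1/2}$, which keeps $y$ a solution since \eqref{k1} is linear and homogeneous, one has $1/y^2=-3840\pi^2\Delta/E_4'^2$, the quantity already computed just above the statement.

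The substantive point is to make precise why producing $f$ with $f'=1/y^2$ forces $\{f,z\}=R$. I would use the first direction of the Section 2 dictionary via reduction of order: from the single solution $y$ of \eqref{k1}, set $f=\int dz/y^2$, so that $y_2=y$ and $y_1=y\,f$ are two solutions of \eqref{k1} whose Wronskian $y_1'y_2-y_1y_2'$ equals the nonzero constant $1$. They are therefore linearly independent, and $f=y_1/y_2$ is a solution of $\{f,z\}=R$ by the quoted equivalence. It then only remains to integrate $f'=1/y^2$, which is exactly the chain of equalities displayed before the theorem: using the weight $12$ identity $5E_4'^2-4E_4E_4''=-3840\pi^2\Delta$ (itself an instance of the general fact that $(k+1)g'^2-kgg''$ has weight $2k+4$, with the scalar fixed by the leading Fourier coefficient at $\infty$), one rewrites $f'=(5E_4'^2-4E_4E_4'')/E_4'^2=1+4(E_4/E_4')'$.

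Integrating gives $f(z)=z+4\,E_4/E_4'+C$, and I would discard $C$ because the Schwarz derivative is invariant under post-composition with M\"obius maps, in particular under the translation $w\mapsto w+C$, so that $\{f+C,z\}=\{f,z\}$. The one thing needing a word of care, rather than a real obstacle, is local univalence: both the reduction-of-order step and the Section 2 equivalence presuppose $f'\neq0$, and here $f'=-3840\pi^2\Delta/E_4'^2$ with $\Delta$ nonvanishing on $\mathcal H$, so $f$ is a well-defined meromorphic function whose poles are confined to the zeros of $E_4'$ and the identity $\{f,z\}=4\pi^2E_4$ holds as an identity of meromorphic functions. The route above is designed precisely to sidestep the genuinely painful alternative: computing $f'$, $f''$, $f'''$ directly from $f=z+4E_4/E_4'$ and collapsing $\{f,z\}$ by hand through the Ramanujan identities \eqref{ram1}--\eqref{ram3}, whose simplification would be the main difficulty of a brute-force proof.
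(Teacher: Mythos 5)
Your proposal is correct and is essentially the paper's own argument: the paper's proof (the paragraph preceding the theorem, there being no separate proof environment) likewise rescales the Theorem 5.1 solution, passes to $f$ with $f'=1/y^2$ via the Section 2 dictionary, and integrates $f'=1+4\left(E_4/E_4'\right)'$, so your Wronskian/reduction-of-order step and the treatment of the integration constant merely make rigorous what the paper compresses into ``according to Section 2''. One shared blemish worth fixing: the constant you quote has the wrong sign in both your writeup and the paper, since comparing $q$-expansions ($E_4''=-960\pi^2q+\cdots$, so $-4E_4E_4''$ contributes $+3840\pi^2q$) or using $5E_4'^2-4E_4E_4''=\tfrac{20\pi^2}{9}\left(E_4^3-E_6^2\right)=\tfrac{20\pi^2}{9}\cdot 1728\,\Delta$ gives $5E_4'^2-4E_4E_4''=+3840\pi^2\Delta$, so the correctly rescaled solution is $y=(\pi\sqrt{3840})^{-1}E_4'\Delta^{-1/2}$ with no factor of $i$; this is harmless for both proofs because at worst one gets $f'=-\bigl(1+4\left(E_4/E_4'\right)'\bigr)$, and $\{-f+C,z\}=\{f,z\}$ by M\"obius invariance of the Schwarzian.
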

As a consequence, we have
\begin{cor}The function
$$
u\,=\,\frac{1}{i\pi}\frac{E_4''}{E_4'}-E_2
$$ is a solution to the Riccati equation
$$\frac{1}{i\pi}u'+u^2=E_4$$
\end{cor}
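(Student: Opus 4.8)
The plan is to read off $u$ directly from the solution $f$ of the Schwarzian equation produced in Theorem 5.2, using the Schwarz--Riccati correspondence recalled in Section 2. There, setting $u_0=f''/f'$ turns $\{f,z\}=R(z)$ into $2u_0'-u_0^2=R(z)$. Applying this to the $f$ of \eqref{equiv} with $R=4\pi^2E_4$ gives $2u_0'-u_0^2=4\pi^2E_4$, so that $u_0'=\tfrac12 u_0^2+2\pi^2E_4$. The first step is then to rescale $u_0$ so that the resulting equation takes the normalized form $\frac{1}{i\pi}u'+u^2=E_4$ of \eqref{ricgen} with $k=1$.

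Concretely, I would look for a constant $c$ with $u=c\,u_0$. Substituting the expression for $u_0'$ into $\frac{1}{i\pi}u'+u^2$ and collecting the $u_0^2$ and $E_4$ terms, the coefficient of $u_0^2$ is $\frac{c}{2i\pi}+c^2$ and the coefficient of $E_4$ is $\frac{2\pi c}{i}$. Both conditions (the $u_0^2$ term must vanish and the $E_4$ term must equal $1$) are met simultaneously by $c=-\frac{1}{2i\pi}$, so the candidate solution is $u=-\frac{1}{2i\pi}\frac{f''}{f'}$. This is precisely the $k=1$ analogue of the formula $u_n=-\frac{n}{2\pi i}\frac{f_n''}{f_n'}$ appearing in Theorem 4.1.

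The second step makes this explicit. Since $f'=1/y^2=-3840\pi^2\Delta/E_4'^2$ was already computed before Theorem 5.2, logarithmic differentiation gives $\frac{f''}{f'}=\frac{\Delta'}{\Delta}-2\frac{E_4''}{E_4'}$, and then \eqref{ram0}, i.e.\ $\Delta'/\Delta=2\pi i E_2$, yields $\frac{f''}{f'}=2\pi i E_2-2\frac{E_4''}{E_4'}$. Multiplying by $-\frac{1}{2i\pi}$ collapses this, via $-\frac{1}{2i\pi}\cdot 2\pi i=-1$ and $-\frac{1}{2i\pi}\cdot(-2)=\frac{1}{i\pi}$, to $u=\frac{1}{i\pi}\frac{E_4''}{E_4'}-E_2$, as claimed.

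Once the scaling constant is fixed the computation is routine, so I expect no serious obstacle; the only delicate point is the bookkeeping of the factors $i$ and $\pi$ and the signs, which is where an error would most easily creep in. As an independent cross-check I would instead start from the linear solution $y=E_4'\,\Delta^{-1/2}$ of Theorem 5.1 and use the logarithmic-derivative passage from \eqref{lode} to the Riccati equation noted at the end of Section 2: setting $u=\frac{1}{i\pi}\frac{y'}{y}$ and again invoking \eqref{ram0} gives $\frac{y'}{y}=\frac{E_4''}{E_4'}-\pi i E_2$, hence the same $u$, confirming the result and showing the normalization of $y$ is irrelevant since $y'/y$ is unchanged under scaling.
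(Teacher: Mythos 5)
Your proposal is correct and takes essentially the same route as the paper, which presents the corollary as an immediate consequence of Theorem 5.2 via the Schwarzian--Riccati correspondence of Section 2: one rescales to $u=-\frac{1}{2\pi i}\frac{f''}{f'}$ (the $k=1$ instance of the formula in Theorem 4.1) and then uses the already-established identity $f'=-3840\pi^2\Delta/E_4'^2$ together with $\Delta'/\Delta=2\pi i E_2$ to obtain the stated expression. Your cross-check through $y=E_4'\Delta^{-1/2}$ and the logarithmic-derivative passage from the linear ODE is the other equivalent path offered by Section 2, and both computations are carried out correctly.
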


Unlike the previous section, the solution \eqref{equiv} is
 not a modular function and in fact it is not invariant under any non-identity element of
 $\mbox{SL}_2(\mathbb Z)$. More precisely it has the following
  surprising
equivariance properties.
\begin{thm}
The function $f$ defined on $\mathcal H$ by \eqref{equiv}
satisfies
\begin{equation}\label{equiv1}
f\left(\frac{az+b}{cz+d}\right)\,=\,\frac{af(z)+b}{cf(z)+d}\
,\quad z\in{\mathcal H}\ ,\quad \binom{a\ b}{c\
d}\in\mbox{SL}_2(\mathbb Z).
\end{equation}
\end{thm}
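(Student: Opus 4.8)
The plan is to reduce the claimed identity \eqref{equiv1} to the two generators $S=\binom{0\ -1}{1\ 0}$ and $T=\binom{1\ 1}{0\ 1}$ of $\mbox{SL}_2(\mathbb Z)$ and to verify it there by direct computation with the transformation laws of $E_4$. The first step is to note that the set of $\gamma\in\mbox{SL}_2(\mathbb Z)$ for which \eqref{equiv1} holds is a subgroup. Indeed, if $f(\gamma_1 z)=\gamma_1\cdot f(z)$ and $f(\gamma_2 z)=\gamma_2\cdot f(z)$ hold identically in $z$, then $f(\gamma_1\gamma_2 z)=\gamma_1\cdot f(\gamma_2 z)=(\gamma_1\gamma_2)\cdot f(z)$, so the property is closed under products, and replacing $z$ by $\gamma^{-1}z$ shows it is closed under inverses. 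Since $S$ and $T$ generate $\mbox{SL}_2(\mathbb Z)$, it suffices to check these two cases.

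It is worth recording why an identity of M\"obius type is to be expected. By the chain rule for the Schwarzian and the vanishing $\{\gamma,z\}=0$ for a linear fractional $\gamma$, one has $\{f\circ\gamma,z\}=\{f,\gamma z\}\,(cz+d)^{-4}$; combining this with the weight-$4$ modularity $E_4(\gamma z)=(cz+d)^4E_4(z)$ and the Schwarzian equation $\{f,z\}=4\pi^2E_4(z)$ proved above gives $\{f\circ\gamma,z\}=\{f,z\}$. By Section 2, two functions with equal Schwarz derivative differ by a linear fractional transformation, so $f\circ\gamma$ is necessarily a M\"obius transform of $f$; the theorem asserts that this transform is $\gamma$ itself, and the generator computation will exhibit it explicitly.

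For $T$ I would use only that $E_4$, being a power series in $q=e^{2\pi iz}$, satisfies $E_4(z+1)=E_4(z)$ and hence $E_4'(z+1)=E_4'(z)$; then $f(z+1)=(z+1)+4E_4(z)/E_4'(z)=f(z)+1=T\cdot f(z)$. For $S$ the crucial computation is $f(-1/z)$: differentiating the modularity relation $E_4(-1/z)=z^4E_4(z)$ yields $E_4'(-1/z)=z^6E_4'(z)+4z^5E_4(z)$, and inserting both into \eqref{equiv} and cancelling the factor $z^5$ gives
$$f(-1/z)\,=\,-\frac1z+\frac{4E_4(z)}{z\bigl(zE_4'(z)+4E_4(z)\bigr)}\,=\,\frac{-E_4'(z)}{zE_4'(z)+4E_4(z)}\,=\,-\frac{1}{f(z)}\,=\,S\cdot f(z).$$
Combining the two generator identities with the subgroup observation yields \eqref{equiv1} for every element of $\mbox{SL}_2(\mathbb Z)$. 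The only real obstacle is this $S$-computation, and within it the one point that needs care is differentiating the weight-$4$ transformation law correctly to obtain $E_4'(-1/z)$; once that factor is in hand the algebra collapses to $-1/f(z)$ as shown.
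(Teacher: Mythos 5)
Your proposal is correct and follows essentially the same route as the paper: reduce \eqref{equiv1} to the generators $T=\binom{1\ 1}{0\ 1}$ and $S=\binom{0\ -1}{1\ \ 0}$, use periodicity of $E_4$ for $T$, and for $S$ differentiate $E_4(-1/z)=z^4E_4(z)$ to get $E_4'(-1/z)=4z^5E_4(z)+z^6E_4'(z)$ and simplify $f(-1/z)$ to $-1/f(z)$, exactly as in the paper's computation. Your additional remarks (the explicit subgroup-closure argument and the Schwarzian explanation of why a M\"obius relation is expected) are sound embellishments but do not change the substance of the argument.
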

\begin{proof}
To show that $f(\gamma\cdot z)=\gamma\cdot f(z)$ holds for all
$\gamma\in \mbox{SL}_2(\mathbb Z)$, it suffices to prove that it
holds  for the two generators  $\binom{1\ 1}{0\ 1}$ and $\binom{0\
-1}{1\ \  0}$ of $\mbox{SL}_2(\mathbb Z)$. It is clear that
$f(z+1)=f(z)+1$ and we only need to check that $f(-1/z)=-1/f(z)$.
Since $E_4(-1/z)=z^4E_4(z)$ we have
$E_4'(-1/z)=4z^5E_4(z)+z^6E_4'(z)$. Hence
\begin{align*}
f\left(\frac{-1}{z}\right)\,&=\,\frac{-1}{z}
\,+\,\frac{4E_4(z)}{4zE_4(z)+z^2E_4'(z)}\\
&=\,\frac{-E_4'}{4E_4+zE_4'}\\
&=\,\frac{-1}{z+4\frac{E_4}{E_4'}}\\
&=\,\frac{-1}{f(z)}.
\end{align*}
This concludes the proof.
\end{proof}

The function $f$ above is an example of the so-called  {\em
equivariant forms} developed by the authors in a forthcoming paper
~\cite{2s} regarding meromorphic functions defined on the upper
half-plane and commuting with the action of a discrete group, see
also \cite{br,sm}.  In fact, for a subgroup of the modular group,
a subclass
 of these equivariant forms consists of functions $f$ defined by
$$f(z)=z+k\frac{g(z)}{g'(z)}\ ,$$
where $g(z)$ is a modular form of weight $k$, \cite{sm}. What
makes the equivariance property work is that the function
$$F(z)\,=\,\frac{k}{f(z)-z}\,=\,\frac{g'(z)}{g(z)}
$$
transforms as
$$(cz+d)^{-2}\,F\left(\frac{az+b}{cz+d}\right)\,=\,F(z)\,+\,\frac{kc}{cz+d},$$
which, in the terms of M. Knopp, \cite{MK}, makes $F(z)$  a
modular integral with period function $\frac{kc}{cz+d}$, or a
quasi-modular form in the language of D. Zagier. Further
connections between these subjects and equivariant forms will be
made explicit in \cite{2s}.
\begin{prop}
The derivative of $f$ is given by
$$f'(z)\,=\,-3840\pi^2\frac{\Delta}{{E'_4}^2}.$$
\end{prop}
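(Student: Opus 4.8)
The plan is simply to differentiate the closed expression \eqref{equiv} for $f$ and then to recognize the resulting numerator as a modular form already identified in this section. First I would apply the quotient rule to the second summand of $f(z)=z+4E_4/E_4'$, which gives $\left(E_4/E_4'\right)'=\bigl((E_4')^2-E_4E_4''\bigr)/(E_4')^2$. Hence $f'(z)=1+4\bigl((E_4')^2-E_4E_4''\bigr)/(E_4')^2$, and placing the constant term $1$ over the common denominator $(E_4')^2$ merges it with the $4(E_4')^2$ in the numerator to produce
\[
f'(z)=\frac{5E_4'^2-4E_4E_4''}{(E_4')^2}.
\]
This is exactly the combination of derivatives of $E_4$ that surfaced in the computation preceding the Schwarzian equation $\{f,z\}=4\pi^2E_4$ earlier in this section.

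The decisive step is then to replace the numerator by its closed form. Because $E_4$ is a holomorphic modular form of weight $4$ for $\mathrm{SL}_2(\mathbb Z)$, the quantity $5E_4'^2-4E_4E_4''$ is, by the weight-raising identity recalled at the opening of this section applied to $g=E_4$, a holomorphic modular form of weight $12$; moreover, since both $E_4'$ and $E_4''$ vanish at the cusp, its constant Fourier coefficient is zero. As the space of weight-$12$ cusp forms for $\mathrm{SL}_2(\mathbb Z)$ is one-dimensional and spanned by $\Delta$, the form $5E_4'^2-4E_4E_4''$ must be a scalar multiple of $\Delta$, and comparing leading $q$-coefficients pins down the scalar, giving $5E_4'^2-4E_4E_4''=-3840\pi^2\,\Delta$. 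Substituting this into the displayed formula for $f'$ yields $f'(z)=-3840\pi^2\,\Delta/(E_4')^2$, as claimed.

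There is no genuine analytic difficulty here: the statement is essentially a repackaging of the computation that produced $\{f,z\}=4\pi^2E_4$, now read backwards through the relation $f'=1/y^2$ for the normalized solution $y=(1/i\pi\sqrt{3840})E_4'\Delta^{-1/2}$. The only step demanding real care is the determination of the constant $-3840\pi^2$, both in magnitude and in sign, which rests on accurate bookkeeping of the factors of $2\pi i$ entering the Ramanujan identities \eqref{ram1}--\eqref{ram3} when expanding $E_4'$ and $E_4''$ in powers of $q$. Once that weight-$12$ identity is secured, the Proposition follows by a one-line substitution, and I would close by verifying that the formula $f'=1/y^2$ reproduces the same expression, thereby confirming that the normalization of $y$ is compatible with the leading coefficient of $\Delta$.
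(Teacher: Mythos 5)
Your proposal follows exactly the paper's own route: quotient rule giving $f'=(5E_4'^2-4E_4E_4'')/E_4'^2$, identification of the numerator as a holomorphic weight-$12$ modular form for $\mathrm{SL}_2(\mathbb Z)$ vanishing at $\infty$ (hence a multiple of $\Delta$), and determination of the multiple from the leading Fourier coefficient. Structurally there is nothing to add; this is the paper's proof.

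The genuine gap is in the one step you assert rather than execute --- pinning down the constant --- and it is precisely the step you yourself flag as the only one ``demanding real care.'' Carried out, it gives the opposite sign. With $q=e^{2\pi iz}$ and $E_4=1+240q+O(q^2)$ one has $E_4'=480\pi i\,q+O(q^2)$ and $E_4''=-960\pi^2q+O(q^2)$, so $5E_4'^2=O(q^2)$ while $-4E_4E_4''=+3840\pi^2q+O(q^2)$; since $\Delta=q+O(q^2)$, this forces
$$5E_4'^2-4E_4E_4''\,=\,+3840\pi^2\,\Delta,$$
not $-3840\pi^2\Delta$. The same conclusion follows algebraically from the Ramanujan identities: $5E_4'^2-4E_4E_4''=\tfrac{20\pi^2}{9}\left(E_4^3-E_6^2\right)=\tfrac{20\pi^2}{9}\cdot 1728\,\Delta=3840\pi^2\Delta$. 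Hence for $f$ as in \eqref{equiv} one gets $f'(z)=+3840\pi^2\Delta/E_4'^2$: the minus sign in the Proposition is an error of the paper itself (whose proof also drops the factor $\pi^2$), and your write-up inherits it instead of detecting it. Notice that the closing consistency check you propose would in fact have exposed the problem rather than confirmed the normalization: for $y=(1/i\pi\sqrt{3840})\,E_4'\Delta^{-1/2}$ one has $1/y^2=-3840\pi^2\Delta/E_4'^2$, which is irreconcilable with the quotient-rule computation; the solution of \eqref{k1} compatible with $f=z+4E_4/E_4'$ is $y=(1/\pi\sqrt{3840})\,E_4'\Delta^{-1/2}$, with no factor of $i$. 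None of this affects the theorem that $\{f,z\}=4\pi^2E_4$, since a constant rescaling of $y$ is still a solution of \eqref{k1}, but both the statement you were given and your proof of it need the sign corrected.
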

\begin{proof} We have
$$f'(z)\,=\,\frac{5{E'_4}^2-4E_4E''_4}{{E'_4}^2}.$$
It is a known fact that if $f$ is a modular form of weight $k$,
then $(k+1)f'^2-kff''$ is a modular form of weight 2k+4. Hence
$5{E'_4}^2-4E_4E''_4$ is a modular form of weight 12 vanishing at
$\infty$. It turns out that $5{E'_4}^2-4E_4E''_4=-3840\Delta$, and
the proposition follows.
\end{proof}

 \end{document}